\documentclass[letter, 11pt, reqno]{amsart}
\usepackage[includeheadfoot, margin=1in]{geometry}
\usepackage{amsfonts}
\usepackage{amsmath}
\usepackage{amsthm}
\usepackage{color}
\usepackage[mathscr]{euscript}
\usepackage{bm}
\usepackage{amscd}
\usepackage{comment}
\usepackage[numbers,square]{natbib}
\usepackage{hyperref}
\usepackage{graphicx}
\usepackage{epstopdf}

\newtheorem{Thm}{Theorem}
\newtheorem*{Lem}{Lemma}

\theoremstyle{remark}
\newtheorem*{Remark}{Remark}
\newtheorem*{Ack}{Acknowledgements}

\begin{document}

\title{$G$-Corks \& Heegaard Floer Homology}
\author{Biji Wong}
\address{Department of Mathematics, Brandeis University, MS 050, 415 South Street, Waltham, MA 02453}
\email{wongb@brandeis.edu}
\urladdr{https://sites.google.com/a/brandeis.edu/bijiwong}
\maketitle

\begin{abstract}
In \cite{AKMR16}, Auckly-Kim-Melvin-Ruberman showed that for any finite subgroup $G$ of $SO(4)$ there exists a contractible smooth 4-manifold with an effective $G$-action on its boundary so that the twists associated to the non-trivial elements of $G$ don't extend to diffeomorphisms of the entire manifold. We give a different proof of this phenomenon using the Heegaard Floer theoretic argument in \cite{AkKara11}.
\end{abstract}

\section{Introduction}

A cork is a contractible smooth 4-manifold with an involution on its boundary that does not extend to a diffeomorphism of the entire manifold. The first example of a cork was given by Akbulut in \cite{Ak91}. Since then, other examples have been constructed by Akbulut-Yasui in \cite{AkYa08}, Akbulut-Yasui in \cite{AkYa09}, and Tange in \cite{Tang16, Tange162}. Corks can be used to detect exotic structures, see \cite{Ak91, AkYa08, AkYa09, AKMR16, BiGo96, Tang16, Tange162}. In fact, any two smooth structures on a closed simply-connected 4-manifold differ by a single cork twist, see \cite{CFHS96, M96}. A cork twist removes an embedded cork and reglues it using the involution. The involution on the boundary of a cork can be regarded as a $\mathbb{Z}_2$-action, so it is natural to ask if contractible smooth 4-manifolds with other kinds of effective group actions on the boundaries can also be used to detect exotic structures. A number of recent papers have answered this in the affirmative, constructing examples of $G$-corks, $G \neq \mathbb{Z}_2$, that embed inside closed smooth 4-manifolds so that removing and regluing using the $|G|$ twists produces $|G|$ distinct smooth structures, see \cite{AKMR16, GompfCorks1, GompfCorks2, AkCorks, Tange162}. A $G$-cork is a contractible smooth 4-manifold with an effective $G$-action on its boundary so that the twists associated to the non-trivial elements of $G$ do not extend to diffeomorphisms of the entire manifold. 

The purpose of this paper is to use the Heegaard Floer theoretic argument in \cite{AkKara11} to give a different proof that the examples in \cite{AKMR16} are in fact $G$-corks. These examples are defined as follows. Fix a finite subgroup $G$ of $SO(4)$. Let $n = |G|$. Let $\mathbb{W}$ be the Akbulut cork from \cite{Ak91} shown in Figure \ref{fig:1}.

\begin{figure}[h]
\centering
\def\svgwidth{200pt}
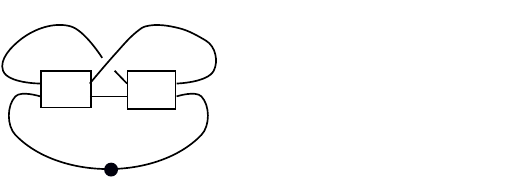
\caption{The Akbulut cork}
\label{fig:1}
\end{figure}

There is an isotopy of $S^3$ that interchanges the two link components on the right side of Figure \ref{fig:1}. This gives an involution $\tau$ of $ \partial \mathbb{W}$. Let $\mathbb{S}$ denote the boundary sum $\mathrel{\natural}_n \mathbb{W}$. Note that its boundary $\partial \mathbb{S}$ inherits an involution $\sigma$. Define $\overline{\mathbb{T}}$ to be the boundary sum of $B^4$ with $n$ copies of $\mathbb{S}$, namely $\overline{\mathbb{T}} = B^4 \mathrel{\natural} \big( G \times \mathbb{S} \big)$. So that we get a well-defined action of $G$ on $\partial \overline{\mathbb{T}}$, we assume that the $n$ copies of $\mathbb{S}$ are attached along 3-balls in $\partial B^4$ that form a principal orbit under the linear action of $G$ on $\partial B^4$. Then we define the action of $G$ on $\partial \overline{\mathbb{T}}$ to be the linear action on $\partial B^4$ and left multiplication on the copies of $\partial \mathbb{S}$. To get the $G$-cork, we twist a copy $\mathbb{S}'$ of $\mathbb{S}$ in the interior of $1 \times \mathbb{S} \subset \overline{\mathbb{T}}$ by the involution $\sigma$. Let $\mathbb{T}$ denote the resulting 4-manifold $B^4 \mathrel{\natural} \Big(\mathbb{S}' \cup_{\sigma} \big( (1 \times \mathbb{S}) - \mathbb{S}' \big) \Big) \mathrel{\natural} \big( (G -1) \times \mathbb{S} \big) $. Note that the action of $G$ on $\partial \overline{\mathbb{T}}$ descends to an action of $G$ on $\partial \mathbb{T}$. Then we have the following:

\begin{Thm}\label{Thm1}
$\mathbb{T}$ is a G-cork.
\end{Thm}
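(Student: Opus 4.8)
The plan is to verify the three defining properties of a $G$-cork: that $\mathbb{T}$ is contractible, that $G$ acts effectively on $\partial\mathbb{T}$, and, most importantly, that for every non-trivial $g\in G$ the boundary diffeomorphism $\phi_g$ coming from the $G$-action does not extend to a diffeomorphism of $\mathbb{T}$. The first two are formal. Since $\mathbb{W}$ is contractible, so is any boundary sum of copies of it, so $\mathbb{S}=\mathrel{\natural}_n\mathbb{W}$ and $\overline{\mathbb{T}}=B^4\mathrel{\natural}(G\times\mathbb{S})$ are contractible; indeed $\overline{\mathbb{T}}\cong\mathrel{\natural}_{n^2}\mathbb{W}$. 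Passing to $\mathbb{T}$ removes an interior copy $\mathbb{S}'$ along the homology sphere $\partial\mathbb{S}'=\#_n\partial\mathbb{W}$ and reglues by $\sigma$; since $\partial\mathbb{S}'$ is a homology sphere and $\sigma$ is orientation preserving, Mayer--Vietoris shows $\mathbb{T}$ is a homology ball, while van Kampen shows $\pi_1(\mathbb{T})=1$, because precomposing the map $\pi_1(\partial\mathbb{S}')\to\pi_1(\overline{\mathbb{T}}\setminus\mathrm{int}\,\mathbb{S}')$ with the automorphism $\sigma_\ast$ leaves the normally generated subgroup unchanged. Hence $\mathbb{T}$ is contractible. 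Effectiveness is immediate: the twist is supported in the interior, so $\partial\mathbb{T}=\partial\overline{\mathbb{T}}$ with the same action, and the linear action on $\partial B^4$ together with the free (hence faithful) permutation action of left multiplication on the copies is effective.

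The crux is non-extension, and the guiding point is that $\phi_g$ \emph{does} extend over the untwisted $\overline{\mathbb{T}}$ but should not over $\mathbb{T}$. Because the copies of $\mathbb{S}$ are attached along a principal orbit, the element $g\in G\subset SO(4)$ acts linearly on $B^4$ permuting the attaching balls exactly as left multiplication permutes the copies; relabelling the copies accordingly produces a diffeomorphism $\Psi_g\colon\overline{\mathbb{T}}\to\overline{\mathbb{T}}$ with $\Psi_g|_{\partial}=\phi_g$. This $\Psi_g$ carries the twist region $\mathbb{S}'\subset 1\times\mathbb{S}$ onto a copy in $g\times\mathbb{S}$ and conjugates $\sigma$ to $\sigma$, so it restricts to a diffeomorphism $\Psi_g\colon\mathbb{T}_1\to\mathbb{T}_g$, where $\mathbb{T}_h$ denotes $\overline{\mathbb{T}}$ with the interior $\sigma$-twist performed in the copy indexed by $h$ (so $\mathbb{T}=\mathbb{T}_1$). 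If $\phi_g$ extended to $\Phi\colon\mathbb{T}\to\mathbb{T}$, then $\Psi_g\circ\Phi^{-1}\colon\mathbb{T}_1\to\mathbb{T}_g$ would restrict to the identity on $\partial\mathbb{T}$. Therefore it suffices to prove that $\mathbb{T}_1$ and $\mathbb{T}_g$ are not diffeomorphic rel boundary for each $g\neq 1$.

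To separate $\mathbb{T}_1$ from $\mathbb{T}_g$ rel boundary I would cap both with one fixed $Z$ satisfying $\partial Z=-\partial\mathbb{T}=\#_{n^2}(-\partial\mathbb{W})$ and compare the closed manifolds $X_h:=\mathbb{T}_h\cup_{\mathrm{id}}Z$; a rel-boundary diffeomorphism $\mathbb{T}_1\cong\mathbb{T}_g$ would force $X_1\cong X_g$. Writing $\overline{\mathbb{T}}=\mathrel{\natural}_{n^2}\mathbb{W}$ and taking $Z=\mathrel{\natural}_{n^2}V_i$ a boundary sum of fillings of $-\partial\mathbb{W}$, capping a boundary sum by a boundary sum produces a connected sum, so $X_h\cong\#_i\big(C_i^{(h)}\cup V_i\big)$, where the $i$-th summand uses the $\tau$-twisted cork $\mathbb{W}_\tau$ if the $i$-th Akbulut cork lies in copy $h$ and $\mathbb{W}$ otherwise. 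Choosing $V_i$ to be the Akbulut--Karakurt capping piece $V$ on a single cork in copy $1$, and a fixed cap $V_0$ on every other cork for which the twisted and untwisted cappings carry identical Heegaard Floer data, additivity of the relevant correction-term invariant under connected sum reduces the inequality $X_1\not\cong X_g$ to the single-cork statement $\mathbb{W}_\tau\cup V\not\cong\mathbb{W}\cup V$ furnished by \cite{AkKara11}.

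The main obstacle is precisely this last reduction: importing the Akbulut--Karakurt $d$-invariant computation so that it persists after the $n^2$-fold connected sum and in the presence of the remaining untwisted summands. Concretely one must choose the caps $V_i$ so that the correction-term obstruction localizes in the single summand $\mathbb{W}\cup V$, check that the neutral caps $V_0$ contribute equally to $X_1$ and $X_g$ no matter how many of them are twisted, and verify that the Akbulut--Karakurt inequality survives the connected sum by additivity of $d$-invariants. Everything else---contractibility, effectiveness, and the reduction through $\Psi_g$---is formal; it is the Heegaard Floer input that makes $\mathbb{T}_1$ and $\mathbb{T}_g$ genuinely non-diffeomorphic rel boundary and thereby prevents $\phi_g$ from extending.
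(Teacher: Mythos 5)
Your formal reductions are fine: contractibility, effectiveness of the boundary action, and the observation that an extension $\Phi$ of $\phi_g$ would combine with the relabelling diffeomorphism $\Psi_g$ to give a rel-boundary diffeomorphism $\mathbb{T}_1\cong\mathbb{T}_g$ are all correct and are reasonable (if not strictly necessary) reformulations. The genuine gap is in the last step, where you cap off with $Z=\mathrel{\natural}_i V_i$ and try to localize the Floer-theoretic obstruction in a single connected summand ``by additivity of the relevant correction-term invariant.'' First, the obstruction in \cite{AkKara11} is not a correction term: it is the non-vanishing of the image of the contact invariant under a cobordism map, equivalently the non-vanishing of the Ozsv\'ath--Szab\'o closed 4-manifold invariant of a Lefschetz fibration versus its vanishing for a manifold containing a square-$1$ torus. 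That invariant is not additive under connected sum; it \emph{vanishes} whenever both summands have $b_2^+>0$. Since your special cap $V$ necessarily has $b_2^+\geq 1$, every neutral cap $V_0$ would have to be homologically trivial, and you would then need to prove that the neutral summands close up to standard pieces \emph{regardless of whether the cork inside them is twisted} (e.g.\ that $\mathbb{W}\cup_{\tau}(-\mathbb{W})$ is standard) --- a nontrivial claim you do not address, and without which $X_1$ and $X_g$ are not visibly comparable. There is also an unexamined step in identifying the single $\sigma$-twist of $\mathbb{S}'=\mathrel{\natural}_n\mathbb{W}$ with $n$ separate $\tau$-twists distributed over the $\mathbb{W}$-summands of $\mathrel{\natural}_{n^2}\mathbb{W}$.

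The paper avoids all of this by never decomposing into connected summands. It puts a Stein structure on all of $\mathbb{T}$ at once, attaches a single $2$-handle along a trefoil, and extends to one concave filling $V$ of $(\partial\mathbb{T},\xi)$ so that $X=\mathbb{T}\cup V$ is a relatively minimal Lefschetz fibration; then $F^{+}_{\mathbb{T},\mathfrak{s}}(c^{+}(\xi))=F^{\mathrm{mix}}_{X,\mathfrak{t}}(\theta^{-}_{(-2)})\neq 0$, while the reglued manifold $X'=\mathbb{T}\cup_g V$ contains a torus of self-intersection $1$, so all its mixed invariants vanish and $F^{+}_{\mathbb{T},\mathfrak{s}}\circ g^{*}(c^{+}(\xi))=0$; a naturality argument then rules out the extension. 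If you want to salvage your connected-sum scheme you must either find caps making the neutral summands genuinely inert, or abandon localization and run the contact-invariant argument globally as the paper does.
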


\noindent The Heegaard Floer theoretic argument also yields the following easy consequence:

\begin{Thm}\label{Thm2}
The $G$-action on $\partial \mathbb{T}$ induces an effective $G$-action on $HF^{+}(-\partial \mathbb{T}, \mathfrak{s})$, where $HF^{+}$ is the plus version of Heegaard Floer homology and $\mathfrak{s}$ is the unique Spin$^{c}$ structure on $\partial \mathbb{T}$.
\end{Thm}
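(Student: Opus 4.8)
The plan is to combine the functoriality of Heegaard Floer homology with the explicit connected-sum description of $\partial\mathbb{T}$. First I would record that $HF^+$ is functorial under orientation-preserving diffeomorphisms: each $g\in G$ acts on $\partial\mathbb{T}$, and hence on $-\partial\mathbb{T}$, by an orientation-preserving diffeomorphism $\phi_g$ fixing the unique Spin$^c$ structure $\mathfrak{s}$, so it induces an automorphism $(\phi_g)_*$ of $HF^+(-\partial\mathbb{T},\mathfrak{s})$, and $g\mapsto(\phi_g)_*$ is a homomorphism $G\to\mathrm{Aut}_{\mathbb{Z}[U]}\big(HF^+(-\partial\mathbb{T},\mathfrak{s})\big)$. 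Effectiveness of the action is then exactly the assertion that this homomorphism is injective, i.e. that $(\phi_g)_*\neq\mathrm{id}$ whenever $g\neq 1$.

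Next I would identify $(\phi_g)_*$ concretely. Since the cork twist producing $\mathbb{T}$ is supported in the interior, $\partial\mathbb{T}\cong\partial\overline{\mathbb{T}}$, and the latter is the connected sum $\#_{g\in G}\partial\mathbb{S}$; the descended $G$-action is the linear action on the $S^3$-summand together with left multiplication permuting the $\partial\mathbb{S}$-summands. Via the Ozsv\'ath--Szab\'o connected-sum formula for Heegaard Floer homology, $HF^+(-\partial\mathbb{T},\mathfrak{s})$ is computed from a tensor product of the Floer complexes of the $n$ summands $-\partial\mathbb{S}$ indexed by $G$, and by naturality $(\phi_g)_*$ is induced by permuting these tensor factors according to the left-regular representation of $G$. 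For $g\neq 1$ this permutation is fixed-point-free, hence a nontrivial permutation of $n\ge 2$ identical factors.

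To conclude I would show that such a nontrivial permutation acts nontrivially on homology. This is where I would import the computation underlying Theorem~\ref{Thm1}, namely that $HF^+_{\mathrm{red}}(\partial\mathbb{W})\neq 0$ from \cite{AkKara11}; consequently $HF^+(-\partial\mathbb{S})$, being built from $n$ copies of $-\partial\mathbb{W}$, contains a homogeneous reduced class $\xi$ distinct from the bottom generator $\theta$ of the tower $\mathcal{T}^+\subseteq HF^+(-\partial\mathbb{S})$. Placing $\xi$ in one tensor slot and $\theta$ in the others produces a class that $(\phi_g)_*$ carries to a different basis element, with $\xi$ moved to the slot $gh\neq h$, so $(\phi_g)_*\neq\mathrm{id}$. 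As this holds for every nontrivial $g$, the $G$-action on $HF^+(-\partial\mathbb{T},\mathfrak{s})$ is effective.

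I expect the main obstacle to lie in the bookkeeping of the last two steps: making the connected-sum identification of $(\phi_g)_*$ with the tensor-factor permutation precise at the level of $HF^+$ rather than $\widehat{HF}$, where the K\"unneth formula carries $\mathrm{Tor}$-terms, Koszul signs, and grading shifts, and then exhibiting explicit homogeneous classes that genuinely detect the permutation on homology. The conceptual point, that nonvanishing of the reduced Floer homology of $\partial\mathbb{W}$ is what makes the permutation visible, is precisely the ingredient supplied by the argument proving Theorem~\ref{Thm1}, which is why this is an easy consequence. Note that, in contrast to Theorem~\ref{Thm1}, here only the boundary action and the nonvanishing of $HF^+_{\mathrm{red}}(\partial\mathbb{W})$ are needed, not the finer asymmetry of the relative invariant of $\mathbb{T}$.
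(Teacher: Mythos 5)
Your argument is essentially correct in outline, but it takes a genuinely different route from the paper. The paper treats Theorem~\ref{Thm2} as a two-line corollary of the Lemma: since $F^{+}_{\mathbb{T},\mathfrak{s}}\big(c^{+}(\xi)\big)=\theta^{+}_{(0)}\neq 0$ while $F^{+}_{\mathbb{T},\mathfrak{s}}\circ g^{*}\big(c^{+}(\xi)\big)=0$, the classes $c^{+}(\xi)$ and $g^{*}\big(c^{+}(\xi)\big)$ have different images under a single homomorphism and hence are distinct, so $c^{+}(\xi)$ itself witnesses effectiveness for every $g\neq 1$. This is exactly the ``finer asymmetry of the relative invariant'' that your last sentence declares unnecessary --- the paper's point is that no new input is needed beyond the Lemma. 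Your route instead identifies $\partial\mathbb{T}\cong\partial\overline{\mathbb{T}}\cong\#_{g\in G}\,\partial\mathbb{S}$, realizes $(\phi_g)_{*}$ as a permutation of tensor factors via the connected-sum formula, and detects the permutation using a nonzero reduced class. This buys a genuinely stronger conclusion --- effectiveness is a purely $3$-dimensional fact about the boundary, visible already for the untwisted $\overline{\mathbb{T}}$ and independent of the cork structure --- but at the cost of two inputs the paper never uses: the computation $HF^{+}_{\mathrm{red}}(\partial\mathbb{W})\neq 0$ (which is in \cite{AkKara11}, but is \emph{not} the computation underlying Theorem~\ref{Thm1}, whose proof runs entirely through the contact invariant and basic classes; you should cite it as an independent fact), and the naturality of the $HF^{+}$ K\"unneth identification with respect to the summand-permuting diffeomorphism, which is a real foundational point rather than mere bookkeeping. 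Both gaps are fillable, so your proof can be made to work, but you should be aware that the intended argument is far shorter and reuses the Lemma verbatim.
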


We assume the reader is familiar with the basics of Heegaard Floer homology for 3 and 4-manifolds, contact geometry, Stein structures, and Lefschetz fibrations. We use $\mathbb{Z}_2$ coefficients throughout to avoid ambiguity in sign.

\begin{Ack} 
The author would like to thank Danny Ruberman for helpful conversations. A part of this work was supported by an NSF IGERT fellowship under grant number DGE-1068620.
\end{Ack}

\section{Proofs of Theorems \ref{Thm1} \& \ref{Thm2}}

We prove Theorem \ref{Thm1} first. We start by equipping $\mathbb{T}$ with the handle decomposition in Figure \ref{fig:2}. 

\begin{figure}[h]
\centering
\def\svgwidth{210pt}
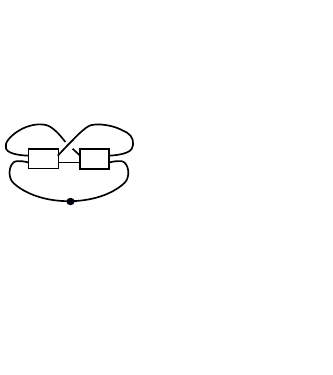
\caption{Handle decomposition of $\mathbb{T}$: There are $n$ rows. The first row represents $\mathbb{S}' \cup_{\sigma} \big( (1 \times \mathbb{S}) - \mathbb{S}' \big)$. Each row after represents a copy of $\mathbb{S}$.}
\label{fig:2}
\end{figure}

\noindent By \cite[Lemma 5.3]{AkYa08}, $\mathbb{T}$ can be given a Stein structure that extends the standard Stein structure on $B^4$. Then $\partial \mathbb{T}$ inherits a contact structure $\xi$. Now fix $g \in G$, $g \neq 1$; by abuse of notation we view this as a diffeomorphism of $\partial \mathbb{T}$. We want to show that $g$ does not extend to a diffeomorphism of $\mathbb{T}$. Let $\mathfrak{s}$ denote the unique Spin$^{c}$ structure on $\mathbb{T}$; we also write $\mathfrak{s}$ for its restriction to $\partial \mathbb{T}$. By puncturing $\mathbb{T}$ in the interior we can view $\mathbb{T}$ as a cobordism from $-\partial \mathbb{T}$ to $S^3$. Then we get the cobordism map $F^{+}_{\mathbb{T}, \mathfrak{s}}: HF^{+}(-\partial \mathbb{T}, \mathfrak{s}) \rightarrow HF^{+}(S^3)$, see \cite{OZ} for details. The twist $g$ induces a second cobordism map $g^{*}: HF^{+}(-\partial \mathbb{T}, \mathfrak{s}) \rightarrow HF^{+}(-\partial \mathbb{T}, \mathfrak{s})$ via the cobordism $(-\partial \mathbb{T} \times [0,\frac{1}{2}]) \mathrel{\cup_{g}} (-\partial \mathbb{T} \times [\frac{1}{2},1])$. The bulk of the proof lies in showing the following: 

\begin{Lem}
Let $c^{+}(\xi) \in HF^{+}(-\partial \mathbb{T}, \mathfrak{s})$ denote the contact invariant associated to $\xi$. Then $F^{+}_{\mathbb{T}, \mathfrak{s}} \big( c^{+}(\xi) \big) \neq 0$, but $F^{+}_{\mathbb{T}, \mathfrak{s}} \circ g^{*} \big( c^{+}(\xi) \big) =0$.
\end{Lem}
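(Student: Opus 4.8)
The plan is to treat the two assertions separately: I would prove the nonvanishing directly from the Stein filling, and reduce the vanishing to the single-cork computation of \cite{AkKara11} via the connected-sum structure of $\mathbb{T}$.

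For the nonvanishing $F^{+}_{\mathbb{T},\mathfrak{s}}(c^{+}(\xi)) \neq 0$, I would invoke functoriality of the contact invariant under Stein cobordisms. Since $\mathbb{T}$ is a Stein filling of $(\partial\mathbb{T},\xi)$ and $\mathfrak{s}$ is the Spin$^{c}$ structure of that filling (the unique one, as $\mathbb{T}$ is contractible), the punctured $\mathbb{T}$ viewed as a cobordism from $-\partial\mathbb{T}$ to $S^{3}$ is exactly the reversal of the Stein cobordism from $B^{4}$ to $\mathbb{T}$. By the Ozsv\'ath--Szab\'o--Plamenevskaya result that a Stein filling carries the boundary contact class to the bottom generator of $HF^{+}(S^{3})$, one gets $F^{+}_{\mathbb{T},\mathfrak{s}}(c^{+}(\xi)) = c^{+}(\xi_{std})$, a generator, hence nonzero. (Working over $\mathbb{Z}_{2}$, as in the paper, removes the sign ambiguities here.)

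For the vanishing I would first rewrite $g^{*}(c^{+}(\xi)) = c^{+}(g^{*}\xi)$ using naturality of the contact invariant under the diffeomorphism $g$, and then exploit the boundary-sum structure of $\mathbb{T}$. Because $\mathbb{T}$ is a boundary sum of $B^{4}$ with copies of $\mathbb{S}$, one of them $\sigma$-twisted, the manifold $\partial\mathbb{T}$ is a connected sum, and over $\mathbb{Z}_{2}$ both the contact class and the filling map factor through the summands: by multiplicativity of $c^{+}$ under contact connected sum and the behavior of cobordism maps under boundary sum (see \cite{OZ}), $F^{+}_{\mathbb{T},\mathfrak{s}}(c^{+}(g^{*}\xi))$ is a tensor product of the per-summand filling maps applied to the per-summand contact classes. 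The key geometric input is that the $G$-action permutes the $\mathbb{S}$-summands by left multiplication, so for $g \neq 1$ the $\sigma$-twisted summand (sitting at the identity slot of $\mathbb{T}$) is carried off that slot. Consequently, in $c^{+}(g^{*}\xi)$ a $\sigma$-twisted contact class comes to sit over a summand whose Stein filling is \emph{untwisted}. That tensor factor is then the untwisted cork filling map applied to a $\tau$-twisted contact class, which is precisely the quantity shown to vanish in \cite{AkKara11}; since one factor of the product is zero, the whole image vanishes, while the matched product computing $F^{+}_{\mathbb{T},\mathfrak{s}}(c^{+}(\xi))$ remains a product of generators.

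The main obstacle is the vanishing half, and within it two points demand care. The deeper one is the single-cork identity $F^{+}_{\mathbb{W}}(\tau^{*}c^{+}(\xi_{\mathbb{W}})) = 0$ imported from \cite{AkKara11}, which rests on the Legendrian handle picture of the Akbulut cork and is the true source of the asymmetry between $\xi$ and its twist. The more bookkeeping-heavy one is justifying the tensor factorization of $F^{+}_{\mathbb{T},\mathfrak{s}} \circ g^{*}$ on the contact class: I must check that the connected-sum formula, the multiplicativity of the contact invariant, and the $G$-permutation of the summands are mutually compatible, so that a single vanishing factor indeed forces $F^{+}_{\mathbb{T},\mathfrak{s}} \circ g^{*}(c^{+}(\xi)) = 0$ without disturbing the nonvanishing of $F^{+}_{\mathbb{T},\mathfrak{s}}(c^{+}(\xi))$.
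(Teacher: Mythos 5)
Your two halves fare very differently. The nonvanishing half is correct and is in fact a more direct route than the paper's: you invoke functoriality of the contact class under Stein cobordisms (Plamenevskaya, Ozsv\'ath--Szab\'o) to get $F^{+}_{\mathbb{T},\mathfrak{s}}(c^{+}(\xi)) = c^{+}(\xi_{std}) \neq 0$ straight from the Stein structure on $\mathbb{T}$, whereas the paper re-derives this by attaching a $2$-handle along a trefoil, capping $\mathbb{T}\cup M$ off to a Lefschetz fibration $X$, and combining $F^{\mathrm{mix}}_{X,\mathfrak{t}}(\theta^{-}_{(-2)})=\theta^{+}_{(0)}$ with $F^{\mathrm{mix}}_{V,\mathfrak{t}}(\theta^{-}_{(-2)})=c^{+}(\xi)$. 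For the vanishing half the paper again works globally: it forms $X'=\mathbb{T}\cup_{g}V$, observes that after the twist the trefoil caps off to an embedded torus of self-intersection $1$, so $X'$ has no basic classes and every mixed map vanishes; no connected-sum decomposition of $\partial\mathbb{T}$ is ever used.

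Your vanishing half, by contrast, has a genuine gap in exactly the step you label as bookkeeping. The factorization you assert --- that $HF^{+}(-\partial\mathbb{T},\mathfrak{s})$ splits as a tensor product over the summands, that $c^{+}$ is multiplicative, and that $F^{+}_{\mathbb{T},\mathfrak{s}}$ is the tensor product of per-summand filling maps --- is false as stated and is not in \cite{OZ}. The K\"unneth theorem holds for $\widehat{HF}$ and for $CF^{-}$, not for $HF^{+}$: already $HF^{+}(S^{3}\# S^{3})=\mathcal{T}^{+}$, while $\mathcal{T}^{+}\otimes_{\mathbb{Z}_{2}}\mathcal{T}^{+}$ is far larger and $\mathcal{T}^{+}\otimes_{\mathbb{Z}_{2}[U]}\mathcal{T}^{+}=0$, so there is no slot-by-slot product of $HF^{+}$ groups in which your ``one factor zero kills the product'' argument can be read off. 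The strategy is probably salvageable: run everything in $\widehat{HF}$, where $\hat{c}(\xi_{1}\#\xi_{2})=\hat{c}(\xi_{1})\otimes\hat{c}(\xi_{2})$ does hold over $\mathbb{Z}_{2}$, and then push forward to $HF^{+}$ using that $c^{+}$ is the image of $\hat{c}$ and that cobordism maps commute with $\widehat{HF}\to HF^{+}$. But even then you must prove a boundary-connected-sum formula for the maps $\widehat{F}$ (via the composition law and a stabilization-by-$(Y\times I)$ argument), verify that the contact structure on the boundary of the $\sigma$-twisted summand is $\sigma_{*}$ of the untwisted one so that the relevant factor really is the quantity treated in \cite{AkKara11}, and repeat the reduction once more to pass from $\mathbb{S}=\natural_{n}\mathbb{W}$ with $\sigma$ down to a single $\mathbb{W}$ with $\tau$. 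None of this is supplied by your citations, so as written the reduction to the single-cork vanishing does not go through.
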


\begin{proof}
First attach a 2-handle to $\partial \mathbb{T}$ along a trefoil with framing 1 as in Figure \ref{fig:3}. 

\begin{figure}[h] 
\centering
\def\svgwidth{240pt}
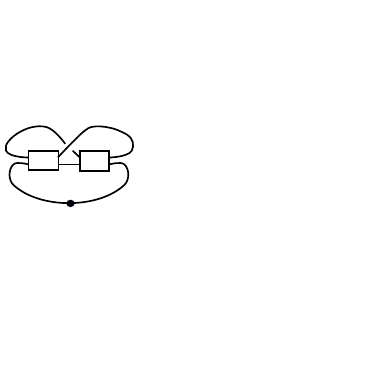
\caption{$\mathbb{T} \mathrel{\cup} $ 2-handle}
\label{fig:3}
\end{figure}

\noindent If we replace the dotted 1-handle linking the trefoil with a pair of 3-balls and put the trefoil in Legendrian position, then we see that the Thurston-Bennequiun number of the tangle is 2, which is 1 more than the framing we started with. By Eliashberg's criterion \cite[Proposition 2.3]{GompfHandlebodiesStein}, the Stein structure on $\mathbb{T}$ can be extended over the 2-handle. Let $M$ denote the cobordism on $\partial \mathbb{T}$ induced by the 2-handle attachment. Then $M$ inherits a Stein structure. By \cite[Lemma 3.6]{AkKara11}, we can extend $M$ to a concave symplectic filling $V$ of $(\partial \mathbb{T}, \xi)$ so that the closed smooth 4-manifold $X := \mathbb{T} \cup V$ has $b^{+}_2 > 1$ and admits the structure of a relatively minimal Lefschetz fibration over $S^2$ with generic fiber of genus $> 1$. Furthermore, if $\mathfrak{t}$ denotes the canonical Spin$^{c}$ structure on $X$ (and also its restriction to $V$), then by \cite[Theorem 3.2 \& Lemma 3.6]{AkKara11},
\[
F^{\text{mix}}_{X,\mathfrak{t}}(\theta^{-}_{(-2)}) = \theta^{+}_{(0)},
\]
\[
F^{\text{mix}}_{V,\mathfrak{t}}(\theta^{-}_{(-2)}) = c^{+}(\xi).
\]
Here $F^{\text{mix}}_{X,\mathfrak{t}}$ is the mixed homomorphism $HF^{-}(S^3) \rightarrow HF^{+}(S^3)$ obtained by puncturing $X$ twice, with one puncture in the interior of $V$ and the other puncture equal to the above puncture of $\mathbb{T}$, $F^{\text{mix}}_{V,\mathfrak{t}}$ is the mixed homomorphism $HF^{-}(S^3) \rightarrow HF^{+}(-\partial \mathbb{T}, \mathfrak{s})$ obtained by puncturing $V$ in the same location as above, $\theta^{-}_{(-2)}$ is the generator of $HF^{-}(S^3)$ with absolute grading --2, and $\theta^{+}_{(0)}$ is the generator of $HF^{+}(S^3)$ with absolute grading 0, see \cite{OZ} for details. Putting this together, we get
\[
0 \neq \theta^{+}_{(0)} = F^{\text{mix}}_{X, \mathfrak{t}}(\theta^{-}_{(-2)}) = F^{+}_{\mathbb{T}, \mathfrak{s}} \circ F^{\text{mix}}_{V,\mathfrak{t}}(\theta^{-}_{(-2)}) = F^{+}_{\mathbb{T}, \mathfrak{s}} \big( c^{+}(\xi) \big). 
\]
\indent All that remains to show is that $F^{+}_{\mathbb{T}, \mathfrak{s}} \circ g^{*} \big( c^{+}(\xi) \big) =0$. Let $X'$ denote $\mathbb{T} \cup_{g} V$ obtained from $X=\mathbb{T} \cup V$ by removing $\mathbb{T}$ and regluing it with the diffeomorphism $g: \partial \mathbb{T} \rightarrow \partial \mathbb{T}$. In $X'$ we have $\mathbb{T} \cup_{g} M$ which admits the following handle decomposition:

\begin{figure}[h] 
\centering
\def\svgwidth{240pt}
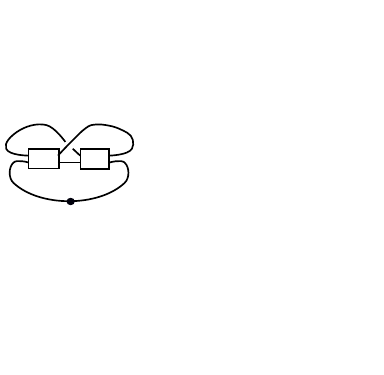
\caption{$\mathbb{T} \cup_{g} M$, where the second row represents $(\{g\} \times \mathbb{S}) \mathrel{\cup}$ 2-handle}
\label{fig:4}
\end{figure}

\noindent Note that the trefoil, thought of in $X'$, gives rise to an embedded torus of self-intersection 1. By \cite[Theorem 3.1]{AkKara11}, $X'$ does not have any basic classes, so for every Spin$^{c}$ structure $\mathfrak{t}'$ on $X'$, the mixed homomorphism $F^{\text{mix}}_{X',\mathfrak{t}'} : HF^{-}(S^3) \rightarrow HF^{+}(S^3)$ is identically zero. Separately, we have a homeomorphism $X \rightarrow X'$ that is the identity on $V$. Let $\mathfrak{t'}$ denote the Spin$^{c}$ structure on $X'$ that corresponds to the canonical Spin$^{c}$ structure $\mathfrak{t}$ on $X$. Note that $\mathfrak{t'}$ restricted to $V$ is the Spin$^{c}$ structure $\mathfrak{t}$. Then we have
\[
0 = F^{\text{mix}}_{X', \mathfrak{t'}}(\theta^{-}_{(-2)}) = F^{+}_{\mathbb{T}, \mathfrak{s}} \circ g^{*} \circ F^{\text{mix}}_{V,\mathfrak{t}}(\theta^{-}_{(-2)}) = F^{+}_{\mathbb{T}, \mathfrak{s}} \circ g^{*} \big( c^{+}(\xi) \big).
\]
\end{proof}

Having proved the main technical lemma, we now finish off the proof of Theorem \ref{Thm1}. Suppose to the contrary the diffeomorphism $g: \partial \mathbb{T} \rightarrow \partial \mathbb{T}$ extends to a diffeomorphism $\widetilde{g}: \mathbb{T} \rightarrow \mathbb{T}$. Then the diffeomorphism $g^{-1}: \partial \mathbb{T} \rightarrow \partial \mathbb{T}$ extends to the diffeomorphism $\widetilde{g}^{-1}: \mathbb{T} \rightarrow \mathbb{T}$. Note that $\widetilde{g}^{-1}$ is orientation-preserving. Let $C$ denote the cobordism from $-\partial \mathbb{T}$ to $S^3$ obtained by stacking the cobordism $(-\partial \mathbb{T} \times [0,\frac{1}{2}]) \mathrel{\cup_{g}} (-\partial \mathbb{T} \times [\frac{1}{2},1])$ from $-\partial \mathbb{T}$ to $-\partial \mathbb{T}$ on top of the punctured $\mathbb{T}$ (with puncture denoted by $*$). Let $C'$ denote the cobordism from $-\partial \mathbb{T}$ to $S^3$ obtained by stacking the identity cobordism  $(-\partial \mathbb{T} \times [0,\frac{1}{2}]) \mathrel{\cup_{id}} (-\partial \mathbb{T} \times [\frac{1}{2},1])$ from $-\partial \mathbb{T}$ to $-\partial \mathbb{T}$ on top of $\mathbb{T}$ punctured at $\widetilde{g}^{-1}(*)$. The orientation-preserving diffeomorphism $\widetilde{g}^{-1}: \mathbb{T} \rightarrow \mathbb{T}$ gives us the following orientation-preserving diffeomorphism $\Theta: C \rightarrow C'$: on $-\partial \mathbb{T} \times [0,\frac{1}{2}]$, we take $\Theta$ to be the identity; on $-\partial \mathbb{T} \times [\frac{1}{2},1]$, we take $\Theta$ to be $g^{-1} \times id$; and on the punctured $\mathbb{T}$, we take $\Theta$ to be $\widetilde{g}^{-1}$. Let $\mathfrak{s}_C$ denote the unique Spin$^c$ structure on $C$ and let $\mathfrak{s}_{C'}$ denote the unique Spin$^c$ structure on $C'$. We then get this commutative diagram:

\begin{center}
$\begin{CD}
HF^{+}(-\partial \mathbb{T}, \mathfrak{s}) @>F^{+}_{C, \mathfrak{s}_C}>> HF^{+}(S^3) \\
@VV(\Theta|_{-\partial \mathbb{T}})_{\ast} V @VV(\Theta|_{S^3})_{\ast} V\\
HF^{+}(-\partial T, \mathfrak{s}) @>F^{+}_{C', \mathfrak{s}_{C'}}>> HF^{+}(S^3).
\end{CD}$
\end{center}

\noindent Note that
\[
F^{+}_{C, \mathfrak{s}_C} = F^{+}_{\mathbb{T}, \mathfrak{s}} \circ g^{*},  
\]
\[
F^{+}_{C', \mathfrak{s}_{C'}} = F^{+}_{\mathbb{T}, \mathfrak{s}},
\]
\[
(\Theta|_{-\partial \mathbb{T}})_{\ast} =id.
\]
From the lemma, $F^{+}_{C, \mathfrak{s}_C} (c^{+}(\xi)) =0$ and $F^{+}_{C', \mathfrak{s}_{C'}} (c^{+}(\xi)) \neq 0$, but this contradicts the commutativity of the diagram. This concludes the proof of Theorem \ref{Thm1}.

\begin{Remark}
The above argument can also be used to show that the smooth 4-manifold $\mathbb{T}'$ obtained by starting with $B^4 \mathrel{\natural} \big( G \times \mathbb{W} \big)$ and twisting a copy of $\mathbb{W}$ in $1 \times \mathbb{W}$ is a $G$-cork. However it is not yet know if $\mathbb{T}'$ admits an embedding into a closed smooth 4-manifold so that removing and regluing using the $|G|$ twists produces $|G|$ distinct smooth structures.
\end{Remark}

We now prove Theorem \ref{Thm2}. Define the action of $G$ on $HF^{+}(-\partial \mathbb{T}, \mathfrak{s})$ by: $g \cdot x = g^{*}(x)$. To see that this is effective, we need to show that for any $g\neq 1$ there is an $x \in HF^{+}(-\partial \mathbb{T}, \mathfrak{s})$ so that $g^{*}(x) \neq x$. So fix $g\neq 1$. The above lemma implies that $g^{*}(c^{+}(\xi)) \neq c^{+}(\xi)$. Hence we can take our $x$ to be $c^{+}(\xi)$. This concludes the proof of Theorem \ref{Thm2}.


\begin{thebibliography}{10}

\bibitem{Ak91}
S. Akbulut, \textit{A fake compact contractible 4-manifold}, J. Differential Geom. 33 (1991) no. 2, 335-356.

\bibitem{AkCorks}
S. Akbulut, \textit{On infinite order corks}, \href{http://arxiv.org/abs/1605.09348}{arXiv:1605.09348v4}, 2016.

\bibitem{AkKara11}
S. Akbulut and C. Karakurt, \textit{Action of the cork twist on Floer homology}, Proceedings of the G\"{o}kova Geometry-Topology Conference 2011, 42-52, Int. Press, Somerville, MA, 2012.

\bibitem{AkYa08}
S. Akbulut and K. Yasui, \textit{Corks, plugs, and exotic structures}, J. G\"{o}kova Geom. Topol. GGT 2 (2008) 40-82.

\bibitem{AkYa09}
S. Akbulut and K. Yasui, \textit{Knotting corks}, J. Topol. 2 (2009), no. 4, 823-839.

\bibitem{AKMR16}
D. Auckly, H. Kim, P. Melvin, and D. Ruberman, \textit{Equivariant corks}, \href{http://arxiv.org/abs/1602.07650}{arXiv:1602.07650v1}, 2016.

\bibitem{BiGo96}
\v{Z}. Bi\v{z}aca and R. E. Gompf, \textit{Elliptic surfaces and some simple exotic $\mathbb{R}^4$'s}, J. Differential Geom. 43 (1996) no. 3, 458-504.

\bibitem{CFHS96}
C. L. Curtis, M. H. Freedman, W. C. Hsiang, and R. Stong, \textit{A decomposition theorem for $h$-cobordant smooth simply-connected compact 4-manifolds}, Invent. Math. 123 (1996), no. 2, 343-348.

\bibitem{GompfHandlebodiesStein}
R. E. Gompf, \textit{Handlebody construction of Stein surfaces}, Ann. of Math. (2) 148 (1998), no. 2, 619-693.

\bibitem{GompfCorks1}
R. E. Gompf, \textit{Infinite order corks}, \href{https://arxiv.org/abs/1603.05090}{arXiv:1603.05090v2}, 2016.

\bibitem{GompfCorks2}
R. E. Gompf \textit{Infinite order corks via handle diagrams}, \href{http://arxiv.org/abs/1607.04354}{arXiv:1607.04354v2}, 2016. 

\bibitem{M96}
R. Matveyev, \textit{A decomposition of smooth simply-connected h-cobordant 4-manifolds}, J. Differential Geom. 44 (1996), no. 3, 571-582.

\bibitem{OZ}
P. Ozsv\'{a}th and Z. Szab\'{o}, \textit{Holomorphic triangles and invariants for smooth four-manifolds}, Adv. Math. 202 (2006), no. 2, 326-400. 

\bibitem{Tang16}
M. Tange, \textit{Finite order corks}, \href{http://arxiv.org/abs/1601.07589}{arXiv:1601.07589v2}, 2016.

\bibitem{Tange162}
M. Tange, \textit{Notes on Gompf's infinite order cork}, \href{https://arxiv.org/abs/1609.04345}{arXiv:1609.04345v1}, 2016.

\end{thebibliography}
\end{document}